\newtheorem{rem}{Remark}
\newtheorem{exa}{Example}
\begin{document}

\title{On the simultaneous Diophantine equations $ m \cdot (x_1^k+x_2^k+ \cdots + x_{t_1}^k)=n \cdot (y_1^k+y_2^k+ \cdots y_{t_2}^k)$; \ $k=1,3$
}

\titlerunning{$ m \cdot (x_1^k+x_2^k+ \cdots + x_{t_1}^k)=n \cdot (y_1^k+y_2^k+ \cdots y_{t_2}^k)$; \ $k=1,3$}        

\author{Farzali Izadi         \and Mehdi Baghalaghdam
}


\institute{Farzali Izadi \at
Department of Mathematics\\ Faculty of Science\\ Urmia University\\ Urmia 165-57153, Iran\\
\email{f.izadi@urmia.ac.ir} 
\and
Mehdi Baghalaghdam \at
Department of Mathematics \\ Faculty of Science \\ Azarbaijan Shahid Madani University \\ Tabriz 53751-71379, Iran\\
\email{mehdi.baghalaghdam@yahoo.com}
}

\date{Received: date / Accepted: date}

\maketitle

\begin{abstract}
In this paper, we solve the simultaneous Diophantine equations $m \cdot( x_{1}^k+ x_{2}^k +\cdots+ x_{t_1}^k)=n \cdot (y_{1}^k+ y_{2}^k +\cdots+ y_{t_2}^k )$,\ $k=1,3$, where $ t_1, t_2\geq3$, and $m$, $n$ are fixed arbitrary and relatively prime positive integers. This is done by choosing two appropriate trivial parametric solutions and obtaining  infinitely many nontrivial parametric solutions.
  Also we work out some examples, in particular the Diophantine systems of $A^k+B^k+C^k=D^k+E^k$, $k=1,3$.
\keywords{ Simultaneous Diophantine equations \and  Cubic Diophantine equations \and Equal sums of cubes.}
 \subclass{Primary11D45 \and  Secondary11D72 \and 11D25.}
\end{abstract}

\section{Introduction}
\label{intro} The cubic Diophantine equations has been studied by some mathematicians. The Diophantine equation $x^3+y^3=z^3$, which is a special case of Fermat's last theorem, has  no solution in integers.\\ Gerardin gave partial solutions of the simultaneous Diophantine equation (SDE)
\begin{equation}\label{(1)}
x^k+y^k+z^k=u^k+v^k+w^k; k=1,3
\end{equation}

\noindent in 1915-16 (as quoted by Dickson, pp. 565, 713 of \cite{4})
 and additional partial solutions were given by Bremner \cite{1}. Subsequently, complete solutions were given
in terms of cubic polynomials in four variables by Bremner and Brudno \cite{2},
as well as by Labarthe \cite{6}.\\

\noindent  In \cite{3} Choudhry presented a complete four-parameter solution of ($1$)
in terms of quadratic polynomials in which each parameter occurs only in the first degree.\\

\noindent The authors in a different paper used two different methods to solve the cubic Diophantine equation $ x_{1}^3+ x_{2}^3 +\cdots+ x_{n}^3=k \cdot (y_{1}^3+ y_{2}^3 +\cdots+ y_{\frac{n}{k}}^3 )$, where $ n \geq3$, and $k \neq n$, is a divisor of $n$ ($\frac{n}{k}\geq2$), and obtained infinitely many nontrivial parametric solutions (see \cite{5}).\\

\noindent In this paper, we are interested in the study of the SDE:

\begin{equation}\label{(2)}
m \cdot( x_{1}^k+ x_{2}^k + \cdots +x_{t_1}^k)=n \cdot (y_{1}^k+ y_{2}^k + \cdots + y_{t_2}^k ), \\ k=1,3
\end{equation}

\noindent where $ t_1, t_2\geq3$, $n$, $m$  are  fixed arbitrary and relatively prime positive integers. To the best of our knowledge ($2$) has not already been considered by any other authors.
\\
\section{Main result}
\noindent  We prove the following main theorem:

\begin{theorem}
Let $ t_1, t_2\geq3$, $n$, $m$ be  fixed arbitrary and relatively prime positive integers. Then ($2$)  has infinitely many nontrivial  parametric solutions in non-zero integers $x_1$, $\cdots$, $x_{t_1}$, $y_1$, $\cdots$, $y_{t_2}$.
\end{theorem}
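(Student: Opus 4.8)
The plan is to exploit the fact that both equations in (2) are homogeneous — the first of degree one, the second of degree three — so the common solution set is a cone: any rational solution scales, by a common denominator, to a solution in integers while preserving nonvanishing of coordinates. Hence it suffices to produce infinitely many rational solutions with nonzero coordinates. The equation $k=1$ is linear and homogeneous, so its solution set is a linear subspace $H\subset\mathbb{Q}^{t_1+t_2}$, and every affine combination of solutions of $k=1$ again satisfies $k=1$. This is exactly what makes the \emph{two trivial parametric solutions} useful: if $P$ and $Q$ both lie on the variety cut out by (2), then the whole secant line $L(t)=(1-t)P+tQ$ lies in $H$, so only the cube equation $k=3$ remains to be imposed along $L$.

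First I would exhibit two trivial parametric solutions $P,Q$ of the full system (2). The delicate point is the cube equation. Placing two active entries $A,B$ on the left and $C,D$ on the right with $A+B=n\ell$ and $C+D=m\ell$ makes $k=1$ hold automatically, and the identity $A^3+B^3=(A+B)^3-3AB(A+B)$ collapses the $k=3$ condition to $3(CD-AB)=(m^2-n^2)\ell^2$, which I can solve over $\mathbb{Q}$ by freely prescribing the two products and then recovering $A,B$ and $C,D$ as roots of quadratics. This is precisely where the asymmetry between $m$ and $n$ enters. The remaining $t_1-3$ and $t_2-3$ slots on the two sides I would fill with blocks contributing $0$ to both $\sum(\cdot)$ and $\sum(\cdot)^3$ — e.g. pairs $(w,-w)$, with a cyclic shift used between $P$ and $Q$ so that the two seeds never share a zero in the same coordinate; this accommodates every $t_1,t_2\ge 3$. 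Two distinct specializations of this family serve as $P$ and $Q$.

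Next I would carry out the secant step. Choosing the seeds so that $E_3(Q-P):=m\sum_i(x_i^{(Q)}-x_i^{(P)})^3-n\sum_j(y_j^{(Q)}-y_j^{(P)})^3\neq 0$, the function $F(t)=m\sum_i x_i(t)^3-n\sum_j y_j(t)^3$, where $(x_i(t);y_j(t))=L(t)$, is a genuine cubic in $t$ with $F(0)=F(1)=0$ because $P,Q$ solve (2). Dividing out the two known factors gives $F(t)=E_3(Q-P)\,t(t-1)(t-t^*)$ with $t^*\in\mathbb{Q}$, so $L(t^*)$ is a new rational solution of (2). Clearing denominators produces an integer solution, and letting the seed parameters range over infinitely many values yields infinitely many solutions.

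The main obstacle is not the algebra of the secant but the two qualitative checks that the output is admissible: that $L(t^*)$ is \emph{nontrivial} — not a mere permutation/rescaling matching of the two sides, and not a solution with both sides vanishing — and that all of its coordinates are nonzero. I would dispatch both by genericity: each coordinate $x_i(t^*),\,y_j(t^*)$ is a rational function of the parameters that is not identically zero, hence vanishes only on a proper closed subset of parameter space, and likewise $t^*\neq 0,1$ off a proper subset; choosing the parameters outside the finite union of these bad loci — still infinitely many choices — guarantees nonzero coordinates and a genuinely new point, while tracking an invariant that depends nontrivially on the parameters shows the resulting solutions are pairwise distinct and nontrivial.
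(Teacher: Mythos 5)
Your core mechanism is the same as the paper's: both equations are homogeneous, the $k=1$ equation is linear so the whole line through two solutions stays inside its zero set, and restricting the cubic form to that line gives a polynomial in $t$ with known rational roots whose remaining root is rational by Vieta; clearing denominators then yields integer solutions. (The paper parametrizes the line as $X+tY$ with $Y$ itself a solution of the cubic, so its polynomial degenerates to a quadratic with roots $0$ and $A/B$; your $(1-t)P+tQ$ gives an honest cubic with roots $0$, $1$, $t^*$ --- a cosmetic difference.) Where you genuinely depart is in the seeds: the paper takes both seeds ``trivial'' in the strong sense that every power sum on each side vanishes identically (pairs $(p,-p)$ plus a possible $0$, with the pairing pattern shifted between the two seeds), so membership in the variety costs nothing and all the work goes into choosing the shift so that $A$, $B$ are not identically zero; your seeds carry two active entries per side tied by $A+B=n\ell$, $C+D=m\ell$, $3(CD-AB)=(m^2-n^2)\ell^2$, which is where the dependence on $m,n$ is absorbed and which makes your final genericity argument cleaner.

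The one concrete gap is in that seed construction: you cannot ``freely prescribe the two products and recover $A,B$ and $C,D$ as roots of quadratics,'' because those quadratics have rational roots only when the discriminants $n^2\ell^2-4AB$ and $m^2\ell^2-4CD$ are rational squares, which a free choice of the products will not arrange. The repair is easy --- set $A=n\ell/2+u$, $B=n\ell/2-u$, $C=m\ell/2+v$, $D=m\ell/2-v$, so the constraint becomes $(u-v)(u+v)=(m^2-n^2)\ell^2/12$ and is rationally parametrized by choosing $u-v$ freely --- but as written the recipe does not produce rational seeds. Two smaller points: with two active entries you have $t_1-2$ (not $t_1-3$) slots to pad, so for odd $t_1$ a lone $0$ entry is unavoidable in each seed and you must rely (as the paper does) on the shift between $P$ and $Q$ to keep the corresponding coordinate of $L(t^*)$ nonzero; and your closing genericity claims (that $E_3(Q-P)\neq 0$, that $t^*\neq 0,1$, and that each coordinate of $L(t^*)$ is a not-identically-zero function of the parameters) are asserted rather than verified --- though the paper's own ``it can be easily shown'' is no more explicit on exactly these points.
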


\begin{proof}:
\noindent Firstly, it is clear that if\\

\noindent  $X=(x_1, \cdots ,x_{t_1},y_1, \cdots ,y_{t_2})$, and
\noindent $ Y=(X_1, \cdots ,X_{t_1},Y_1, \cdots ,Y_{t_2}), $\\

\noindent  are two solutions of ($2$), then for any arbitrary rational numbers $t$, $X+tY$ is also a solution for $k=1$, i.e.,\\

\noindent $m \cdot[(x_1+tX_1)+(x_2+tX_2)+ \cdots +(x_{t_1}+tX_{t_1})]=\\
n \cdot [(y_1+tY_1)+(y_2+tY_2)+ \cdots +(y_{t_2}+tY_{t_2})]$.\\

\noindent We say that $X$ is a trivial parametric solution of ($2$), if it is nonzero and satisfies the system trivially.   
Let $X$, and $Y$  be two proper trivial parametric solutions of ($2$). (we will introduce them later.)

\noindent We suppose that $ X+tY$, is also a solution for the case of $k=3$, where $t$ is a parameter. We wish to find $t$.
\\

\noindent By plugging\\

\noindent $X+tY=(x_1+tX_1,x_2+tX_2, \cdots ,x_{t_1}+tX_{t_1},y_1+tY_1, y_2+tY_2, \cdots ,y_{t_2}+tY_{t_2})$,
\\

\noindent into ($2$), we get:\\

\noindent $m \cdot[(x_1+tX_1)^3+(x_2+tX_2)^3+ \cdots +(x_{t_1}+tX_{t_1})^3]=\\
n \cdot [(y_1+tY_1)^3+(y_2+tY_2)^3+ \cdots +(y_{t_2}+tY_{t_2})^3]$.
\\

\noindent Since $X$ and $Y$ are solutions of ($2$), after some simplifications, we obtain:
\\

\noindent $3t^2(mx_1X_1^2+mx_2X_2^2+ \cdots +mx_{t_1}X_{t_1}^2-ny_1Y_1^2-ny_2Y_2^2- \cdots -ny_{t_2}Y_{t_2}^2)+$\\

\noindent $3t(mx_1^2X_1+mx_2^2X_2+ \cdots +mx_{t_1}^2X_{t_1}-ny_1^2Y_1-ny_2^2Y_2- \cdots -ny_{t_2}^2Y_{t_2})=0.$\\

\noindent Therefore $t=0$ or

 $$t=\frac{mx_1^2X_1+ \cdots +mx_{t_1}^2X_{t_1}-ny_1^2Y_1- \cdots -ny_{t_2}^2Y_{t_2}}{-mx_1X_1^2- \cdots -mx_{t_1}X_{t_1}^2+ny_1Y_1^2+ \cdots +ny_{t_2}Y_{t_2}}:=\frac{A}{B}.$$
\\

\noindent By substituting $t$ in the above expressions, and clearing the denominator $B^3$, we get an integer solution of ($2$) as follows:
\\










\noindent $(x'_1,x'_2, \cdots ,x'_{t_1},y'_1,y'_2, \cdots ,y'_{t_2})=$\\

\noindent $(x_1B+AX_1,x_2B+AX_2, \cdots ,x_{t_1}B+AX_{t_1},y_1B+AY_1,y_2B+AY_2, \cdots ,y_{t_2}B+AY_{t_2})$.\\


\noindent If we pick up trivial parametric solutions $X$ and $Y$ properly, we will get a nontrivial parametric solution of ($2$).
\noindent We should mention that not every trivial parametric solutions of $X$ and $Y$ necessarily give rise to a nontrivial parametric solution of ($2$). So the trivial parametric solutions must be chosen properly.\\

\noindent Now we introduce the  proper trivial parametric solutions.\\
\noindent For the sake of simplicity, we only write down the trivial parametric solutions for the left hand side of ($2$). It is clear that the trivial parametric solutions for the right hand side of ($2$) can be similarly found by the given trivial parametric solutions of the left hand side of ($2$).

\noindent Let $p_i$, $q_i$, $s_i$, $r_i$$\in\mathbb{Z}$.\\

\noindent There are $4$  different possible cases for $t_1$:\\

\noindent $1$. $t_1=2\alpha+1$, $\alpha$ is even.\\

\noindent $X_{left}=(x_1,x_2, \cdots ,x_{t_1})=
(p_1,-p_1,p_2,-p_2, \cdots ,p_\alpha,-p_\alpha, 0)$, and
\\

\noindent $Y_{left}=(X_1,X_2, \cdots ,X_{t_1}) =$
\\
\noindent $(r_1,r_2,-r_1,-r_2,r_3,r_4,-r_3,-r_4, \cdots ,r_{\alpha-1},r_\alpha,-r_{\alpha-1},0,-r_\alpha).$
\\

\noindent $2$. $t_1=2\alpha+1$, $\alpha$ is odd.\\

\noindent $X_{left}=(p_1,-p_1,p_2,-p_2, \cdots ,p_\alpha,-p_\alpha,0)$, and\\

\noindent $Y_{left}=(r_1,r_2,-r_1,-r_2, \cdots ,- r_{\alpha-2},-r_{\alpha-1},r_\alpha,0,-r_\alpha)$.\\

\noindent $3$. $t_1=2\alpha$, $\alpha$ is even.\\

\noindent $X_{left}=(p_1,-p_1,p_2,-p_2, \cdots ,p_\alpha,-p_\alpha)$, and\\

\noindent $Y_{left}=(r_1,r_2,-r_1,-r_2, \cdots , r_{\alpha-1},r_\alpha,-r_{\alpha-1},-r_\alpha).$\\

\noindent $4$. $t_1=2\alpha$, $\alpha$ is odd.\\

\noindent $X_{left}=(p_1,-p_1,p_2,-p_2, \cdots ,p_\alpha,-p_\alpha)$, and\\

\noindent $Y_{left}=(r_1,r_2,-r_1,-r_2, \cdots , r_{\alpha-2},r_{\alpha-1},-r_{\alpha-2},r_\alpha,-r_{\alpha-1},-r_\alpha).$\\

\noindent Finally, it can be easily shown that for every $i \neq j$, we have:
\\

\noindent  $x_iB+AX_i$ $\neq\pm$ $(x_jB+AX_j)$, $y_iB+AY_i$ $\neq\pm$ $(y_jB+AY_j)$ and\\ $x_iB+AX_i$$\neq\pm$ $(y_jB+AY_j)$, \\

\noindent i.e., it is really a nontrivial parametric solution in terms of the parameters $p_i$, $q_i$, $r_i$ and $s_i$.\\
\noindent It is clear that by fixing  all of the parameters $p_i$, $q_i$, $r_i$ and $s_i$, but one parameter, we can obtain a nontrivial one parameter parametric solution of ($2$) and by changing properly the fixed values, finally get infinitely many nontrivial one parameter parametric solutions of ($2$). 

\noindent Now, the proof of the theorem is completed. $\spadesuit$\\
\end{proof}
\noindent In  the remaining  part, we worked out some examples.

\section{Application to examples}

\begin{exa} $m \cdot (x_1^k+x_2^k+x_3^k)=n \cdot(y_1^k+y_2^k+y_3^k);$ $k=1,3$.\\

\noindent Trivial parametric solutions (case $2$):\\

\noindent $X=(x_1,x_2,x_3,y_1,y_2,y_3)=$
\noindent $(p_1,-p_1,0,q_1,-q_1,0)$,\\

\noindent $Y=(X_1,X_2,X_3,Y_1,Y_2,Y_3)=$
\noindent $(r_1,0,-r_1,s_1,0,-s_1)$,\\

\noindent$A=mp_1^2r_1-nq_1^2s_1$,
\noindent$B=-mp_1r_1^2+nq_1s_1^2$,\\

\noindent  nontrivial parametric solution:\\
\noindent  $x_1=p_1B+r_1A,$\\
\noindent  $x_2=-p_1B,$\\
\noindent  $x_3=-r_1A,$\\
\noindent $y_1=q_1B+s_1A,$\\
\noindent $y_2=-q_1B,$\\
\noindent $y_3=-s_1A.$\\

\noindent Numerical example:\\
\noindent $p_1=4$, $q_1=1$, $r_1=2$,
 $s_1=3$,\\

\noindent Solution:\\

\noindent$m \cdot  [(30n)^k+(64m-36n)^k+(-64m+6n)^k]=\\n \cdot[(80m)^k+ (16m-9n)^k+(-96m+9n)^k
];$ $k=1,3$.\\

\end{exa}

\begin{exa} $m \cdot (x_1^k+x_2^k+x_3^k+x_4^k)=n \cdot(y_1^k+y_2^k+y_3^k +y_4^k);$ $k=1,3$.\\

\noindent Trivial parametric solutions (case $3$):\\

\noindent $X=(x_1,x_2,x_3,x_4,y_1,y_2,y_3,y_4)=$
\noindent $(p_1,-p_1,p_2,-p_2,q_1,-q_1,q_2,-q_2)$,\\

\noindent $Y=(X_1,X_2,X_3,X_4,Y_1,Y_2,Y_3,Y_4)=$
\noindent $(r_1,r_2,-r_1,-r_2,s_1,s_2,-s_1,-s_2)$,\\

\noindent Numerical example:\\
\noindent $p_1=2$, $p_2=5$, $q_1=1$, $q_2=3$, $r_1=6$, $r_2=7$,
 $s_1=4$, $s_2=9$,\\

\noindent Solution:\\

\noindent$m \cdot  [(-1456m+104n)^k+(-2093m+1248n)^k+(2093m-1924n)^k+(1456m+572n)^k]=n \cdot[(-1001m+156n)^k+ (-2548m+1196n)^k+(1365m-1196n)^k+(2184m-156n)^k
];$ $k=1,3$.\\

\end{exa}

\begin{exa}
 $x_1^k+x_2^k+ \cdots +x_5^k=n \cdot (y_1^k+y_2^k+ \cdots +y_5^k);\  k=1,3.$
\\

\noindent Trivial parametric solution (case $2$):
\\

\noindent  $X=(x_1,x_2,x_3,x_4,x_5,y_1,y_2,y_3,y_4,y_5)=$
$(p_1,-p_1,p_2,-p_2,0,q_1,-q_1,q_2,-q_2,0)$,
\\

\noindent $Y=(X_1,X_2,X_3,X_4,X_5,Y_1,Y_2,Y_3,Y_4,Y_5)=$
$(r_1,r_2,-r_1,0,-r_2,s_1,s_2,-s_1,0,-s_2)$,
\\

\noindent $A=p_1^2r_1+p_1^2r_2-p_2^2r_1-nq_1^2s_1-nq_1^2s_2+nq_2^2s_1$,
\\

\noindent $B=-p_1r_1^2+p_1r_2^2-p_2r_1^2+nq_1s_1^2-nq_1s_2^2+nq_2s_1^2,$\\

\noindent  nontrivial parametric solution:\\

\noindent $x_1=x_1B+X_1A=p_1B+r_1A$,\\

\noindent $x_2=x_2B+X_2A=-p_1B+r_2A$,\\

\noindent $x_3=x_3B+X_3A=p_2B-r_1A$,\\

\noindent $x_4=x_4B+X_4A=-p_2B$,\\

\noindent $x_5=x_5B+X_5A=-r_2A$,\\

\noindent  $y_1=y_1B+Y_1A=q_1B+s_1A$,\\

\noindent $y_2=y_2B+Y_2A=-q_1B+s_2A$,\\

\noindent  $y_3=y_3B+Y_3A=q_2B-s_1A$,\\

\noindent  $y_4=y_4B+Y_4A=-q_2B$,\\

\noindent $y_5=y_5B+Y_5A=-s_2A$.\\

\noindent Numerical example $1$:\\
\noindent $p_1=5$, $p_2=6$, $q_1=7$, $q_2=8$, $r_1=1$, $r_2=2$, $s_1=3$, $s_2=4$,\\

\noindent Solution:\\

\noindent$(84-36n)^k+(33-417n)^k+(15+289n)^k+(-54-138n)^k+(-78+302n)^k=n \cdot[  (180-292n)^k+(93-765n)^k+(-45+637n)^k+(-72-184n)^k+(-156+604n)^k];$ $k=1,3$.
\\

\noindent $n=0$ $\Longrightarrow$  $5^3+11^3+28^3=18^3+26^3$.\\
It is interesting to see that $5+11+28=18+26$, too.

\begin{rem} By fixing the values  $p_2$, $r_1$, $r_2$, $n=0$, and letting $p_1=:p$, as a parameter, we can obtain infinitely many nontrivial parametric solutions for the simultaneous Diophantine equations of the form $A^k+B^k+C^k=D^k+E^k$, $k=1,3$.
\end{rem}

\noindent Numerical example $2$:\\
\noindent $n=0$, $r_1=1$, $r_2=3$, $p_2=5$, $p_1=:p$,\\

\noindent Solution:\\

\noindent$(12p^2-5p-25)^k+(4p^2+5p-75)^k+(-4p^2+40p)^k=\\ (40p-25)^k+(12p^2-75)^k;  k=1,3$.

\end{exa}

\begin{acknowledgements}
We are very grateful to the referee for the careful reading of the paper and giving several useful comments which improved the quality of the paper.
\end{acknowledgements}



\end{document}